\renewcommand{\leq}{\leqslant}
\renewcommand{\le}{\leqslant}
\renewcommand{\geq}{\geqslant}
\renewcommand{\ge}{\geqslant}
\newcommand{\R}{\mathbb{R}}
\newtheorem{thm}{Theorem}
\newtheorem{lem}[thm]{Lemma}
\newtheorem{rmk}[thm]{Remark}
\newtheorem{cor}[thm]{Corollary}
\newtheorem{prop}[thm]{Proposition}
\newtheorem{definition}[thm]{Definition}
\begin{document}

\title{Neckpinch singularities \\
in fractional mean curvature flows}
\author[Eleonora Cinti]{Eleonora Cinti}
\address{E.C., Weierstra{\ss} Institute for Applied Analysis and Stochastics,
Mohrenstra{\ss}e 39,
10117 Berlin, Germany }

\email{cinti@wias-berlin.de}
\author[Carlo Sinestrari]{Carlo Sinestrari}

\address{C.S., Dipartimento di Ingegneria Civile e Ingegneria Informatica, 
Universit\`a di Roma Tor Vergata,
Via del Politecnico,
00133 Rome, Italy}  \email{sinestra@mat.uniroma2.it}
\author[Enrico Valdinoci]{Enrico Valdinoci}
\address{E.V., 
School of Mathematics and Statistics,
University of Melbourne,
813 Swanston St, Parkville VIC 3010, Australia,
Dipartimento di Matematica, Universit\`a di Milano, Via Cesare Saldini 50,
20133 Milan, Italy, and
Weierstra{\ss} Institute for Applied Analysis and Stochastics,
Mohrenstra{\ss}e 39,
10117 Berlin, Germany. }

\email{enrico.valdinoci@wias-berlin.de}
\thanks{
E.C. was supported by grants 
MTM2011-27739-C04-01 (Spain), 
2009SGR345 (Catalunya), and by the ERC Starting Grant 
``AnOptSetCon'' n. 258685.
E.C. and E.V. were supported by the ERC Starting Grant 
``EPSILON'' 
Elliptic Pde's and Symmetry of Interfaces and Layers for Odd Nonlinearities
n. 277749. C.S. was supported by the group 
GNAMPA of INdAM Istituto Nazionale di Alta Matematica.}
\subjclass[2010]{53C44, 35R11.}
\keywords{Fractional perimeter, fractional mean curvature flow.}
\begin{abstract}
In this paper we consider the evolution of boundaries of sets by a fractional mean curvature flow. We show that, for any dimension $n\geq 2$, there exist embedded hypersurfaces in $\R^n$ which develop a singularity without shrinking to a point. Such examples are well known for the classical mean curvature flow for $n \geq 3$. Interestingly, when $n=2$, our result provides instead a counterexample in the nonlocal framework to the well known Grayson's Theorem \cite{Gr87}, which states that any smooth embedded curve in the plane evolving by (classical) MCF shrinks to a point. The essential step in our construction is an estimate which ensures that a suitably small perturbation of a thin strip has positive fractional curvature at every boundary point.
\end{abstract}
\maketitle
\section{Introduction}

This paper is concerned with the study of a \textit{nonlocal mean curvature flow}. More precisely, we want to study the evolution $E_t$, for time $t>0$ of an initial set $E_0$, such that the velocity of a point $x\in \partial E_t$ in the outer normal direction $\nu$ is given by the quantity $-H^s_E$, where $H^s_E$ denotes the fractional mean curvature of $E$, that is, for any $x\in \partial {E_t}$ we have
\begin{equation}\label{NMCF}
\partial_ t x\cdot \nu=-H^s_{E_t}.
\end{equation}

For a real parameter $s\in (0,1)$, we recall that the fractional mean curvature of a set $E$ at a point $x\in \partial E$ is defined as follows
\begin{equation}\label{H_s}
H^s_E(x):=\int_{\R^n}\frac{\chi_{\mathcal C  E}(y)-\chi_{E}(y)}{|x-y|^{n+s}}\,dy,
\end{equation}
where $\chi_A$ denotes the characteristic function of the set $A$, $\mathcal C A$ denotes the complement of $A$, and the integral above has to be understood in the principal value sense.

This evolution is the natural analogue in the nonlocal setting of the classical mean curvature flow, which has been widely studied in the last decades, see e.g. \cite{Eck,Man}.  While the classical mean curvature flow is the $L^2$-gradient flow of the usual perimeter functional, it can be proved that \eqref{NMCF} is the $L^2$-gradient flow of the \textit{fractional perimeter}, which was first introduced in \cite{CRS} on the basis of motivations coming from interfaces in physical models and probabilistic processes. In the same paper, suitable density estimates, a monotonicity formula, and some regularity results for minimizers were established. The question of the regularity for minimizers of the fractional perimeter was also addressed in several recent works, see~\cite{BFV,CSV,CV,SV}. Further motivations for the study of \eqref{NMCF} come from dislocation dynamics and phase-field theory for fractional reaction-diffusion equations, see \cite{Im}.

The classical mean curvature flow is a quasilinear parabolic problem and a local existence result holds for smooth solutions starting from any compact regular initial surface, see \cite{GH,Man}. As time evolves, solutions typically develop singularities  due to curvature blowup. Several notions of generalized solutions have been introduced to study the flow after the onset of singularities. Particularly relevant for our purposes are the definitions by Chen, Giga, Goto \cite{CGG} and by Evans and Spruck \cite{ES}, based on the level set approach and the notion of viscosity solutions.

For the fractional mean curvature flow, local smooth solutions are also expected to exist, but no proof of this property is available yet. There are existence results for viscosity solutions, first obtained by Imbert \cite{Im}, and later extended to more general nonlocal flows by Chambolle, Morini, Ponsiglione in \cite{CMP,CMP2} and by Chambolle, Novaga, Ruffini \cite{CNR}.

The analysis of the formation of singularities is an important topic in classical mean curvature flow. A pioneering result in this framework was obtained by Huisken \cite{Hui}, who showed that a closed convex surface in $\R^n$, with $n >2$, remains convex along the evolution and shrinks to a point in finite time. If convexity is dropped, then other kinds of singular behaviour may occur. A standard example is the so called \textit{neckpinch}. The idea is to consider a surface which looks like two large balls connected by a very thin cylindrical neck, so that, in dimension $n>2$, the mean curvature in the neck is much larger than the one in the balls, hence the radius of the neck goes to zero faster than the radius of the balls. The existence of this type of surfaces was first proved by Grayson \cite{Gr} and later considered with a simplified proof by Ecker \cite{Eck}, see also Angenent \cite{An}. A similar construction for a flow driven by a different curvature function was done in \cite{AS}.

When $n=2$, a result analogous to \cite{Hui} for convex curves was proved by Gage and Hamilton \cite{GH}. However, a stronger result holds in this dimension in the classical case. In fact, Grayson \cite{Gr87} showed that {\em any smooth closed embedded curve in the plane becomes convex in finite time under the flow}, and therefore, by \cite{GH}, {\em shrinks smoothly to a point}. Thus, all other kinds of singularities are ruled out for embedded curves.

In this paper we construct examples of neckpinch singularities for the fractional mean curvature flow. More precisely, we obtain the 
following result:

\begin{thm}\label{thm-neckpinch}
Let $n\geq 2$. There exists an embedded hypersurface $\mathcal M_0$ in $\R^n$ such that the viscosity solution of the fractional mean curvature flow \eqref{NMCF} starting from $\mathcal M_0$ does not shrink to a point.
\end{thm}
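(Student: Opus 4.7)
I would take $E_0$ to be a dumbbell: two balls $B^\pm := B(\pm L e_n, R)$ with $L \gg R$, connected by a cylindrical neck of radius $\delta \ll R$ along the segment joining their centers, with smooth junctions. The initial surface is $\mathcal{M}_0 := \partial E_0$, and the parameters $R$, $L$, $\delta$ will be tuned so that all the estimates below close. The proof rests on one analytic cornerstone---pointwise positivity of the fractional mean curvature of $E_0$---which then drives an inner-barrier argument to rule out collapse to a point.

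\noindent\textbf{Step 1: the key estimate $H^s_{E_0} > 0$ on $\mathcal{M}_0$.} At points on the spherical caps the inequality is immediate from convexity (giving a lower bound of order $R^{-s}$). At a neck point $x$, the natural comparison object is the infinite cylinder $C_\delta := \{y : |y'| < \delta\}$ of radius $\delta$. By convexity $H^s_{C_\delta} > 0$ everywhere on $\partial C_\delta$, and by scaling
\[
H^s_{C_\delta}(x) = c_{n,s}\,\delta^{-s}, \qquad c_{n,s} > 0.
\]
The discrepancy $H^s_{E_0}(x) - H^s_{C_\delta}(x)$ accounts for (i) closing the infinite cylinder into a finite neck (an $O(L^{-s})$ error coming from truncating the integration region) and (ii) attaching the two large balls, whose joint contribution is bounded by their total volume times the kernel at distance of order $L$, namely $O(R^n L^{-(n+s)})$. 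Choosing $L$ large in terms of $R$ and then $\delta$ small in terms of $L$ renders both perturbations strictly smaller than $\tfrac12 c_{n,s}\delta^{-s}$. A continuity argument extends positivity through the smoothed junction region, where the spherical and cylindrical regimes meet.

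\noindent\textbf{Step 2: inner barriers and non-collapse.} Given Step 1, the comparison principle for viscosity solutions of \eqref{NMCF}, due to Imbert \cite{Im} and extended in \cite{CMP}, applies to the flow starting from $\mathcal{M}_0$. The self-similar radial solutions $B(y, \rho(t))$, with $\rho(t) := (r^{1+s} - \tilde c\, t)_+^{1/(1+s)}$, serve as exact inner barriers: since $B^\pm \subset E_0$, we obtain
\[
B(L e_n, \rho(t)) \cup B(-L e_n, \rho(t)) \subset E_t \qquad \forall\, t \in [0, T_{\mathrm{ball}}),
\]
where $T_{\mathrm{ball}} := R^{1+s}/\tilde c$. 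For every such $t$, $E_t$ contains two disjoint nonempty balls centered at the distant points $\pm L e_n$, so $\overline{E_t}$ has diameter at least $2L$ on the whole interval $[0, T_{\mathrm{ball}})$ and, in the Hausdorff limit as $t \to T_{\mathrm{ball}}^-$, accumulates at two distinct points. This is incompatible with collapse to a single point. Moreover, the same $\delta^{-s}$ lower bound drives the neck width to vanish at a time of order $\delta^{1+s} \ll T_{\mathrm{ball}}$, so $\mathcal{M}_t$ disconnects well before the inner balls extinguish---a genuine neckpinch.

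\noindent\textbf{Main difficulty.} The crux is the sharp estimate in Step 1: quantitatively separating the local $\delta^{-s}$ curvature of the thin neck from the nonlocal $O(R^n L^{-(n+s)})$ perturbation due to the far-away balls, and combining these estimates uniformly across the smoothed junction. This is precisely the ``estimate ensuring positive fractional curvature at every boundary point'' announced in the abstract.
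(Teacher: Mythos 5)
Your overall strategy correctly identifies the key ingredients---a thin neck has positive fractional mean curvature, and inner ball barriers keep the two lobes alive---but Step~2 has a genuine gap that the paper's construction is specifically designed to fill, and Step~1 has a secondary issue that the paper sidesteps.

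\textbf{The critical gap: no outer barrier.} You claim that ``the same $\delta^{-s}$ lower bound drives the neck width to vanish at a time of order $\delta^{1+s}$.'' This is not a valid argument. The flow only admits viscosity solutions, and a pointwise lower bound $H^s_{E_0}\geq c\,\delta^{-s}$ at the neck at time $t=0$ does not propagate forward in time; there is no parabolic estimate or differential Harnack inequality at hand here, and no reason the neck of the evolving set cannot widen or that its curvature cannot decay. The only tool available is comparison with an explicit \emph{supersolution}, i.e.\ an outer barrier whose cross-section closes in finite time while containing the initial set. Your dumbbell does not come equipped with such a barrier, because as the neck shrinks the geometry changes and the curvature estimates change with it. The paper resolves this with the observation (Proposition~\ref{STIMA:CUR:FLAT}) that the lower bound $H^s_E\geq c_0$ holds \emph{uniformly} over all perturbed strips with sufficiently small slope and classical curvature, with $c_0$ depending only on $n$ and $s$ and \emph{not} on the thickness $\epsilon$. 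This uniformity is what allows the authors to take the one-parameter family of arctan-strips $E_{\epsilon(t)}$, shrink $\epsilon(t)=\epsilon_0-\kappa t$ at a constant rate $\kappa<c_0$, and verify that this is a supersolution of~\eqref{NMCF}: the normal speed $\kappa$ is everywhere dominated by $c_0\leq H^s_{E_{\epsilon(t)}}$. Then $A_t\subset E_{\epsilon(t)}$ by Proposition~\ref{comparison}, the cross-section of $A_t$ at $x_1=0$ is forced to be empty by time $2\epsilon_0/\kappa$, and by choosing $\epsilon_0$ small this happens well before the inner balls vanish. Note also that the inner barrier alone does not finish the proof as you suggest: showing that $\mathrm{diam}(E_t)\geq 2L$ for $t<T_{\mathrm{ball}}$ does not preclude the set collapsing to a single point at some later time; one must actually show the set \emph{disconnects}, which is what the supersolution delivers.

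\textbf{The secondary issue: the junction region.} Your estimate for neck points via the infinite cylinder $C_\delta$, and for spherical cap points via convexity, is sound in spirit (indeed removing the cylinder's infinite tails is a favorable error, so the correct perturbation is only the $O(R^nL^{-(n+s)})$ term from the balls). But ``a continuity argument extends positivity through the smoothed junction'' is not an argument: the two estimates are genuinely different in nature, neither applies at the junction, and there is no uniform bound bridging them. The paper avoids the junction entirely by working on the unbounded arctan-strip, whose boundary has slope and curvature controlled \emph{globally} by $\delta$, so Proposition~\ref{STIMA:CUR:FLAT} applies at every point of $\partial E$ simultaneously. If you want to retain the dumbbell, you would need an explicit quantitative estimate at junction points, which is not automatic.

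In short: your proposal has the right geometric picture and the right comparison-principle framework, but it lacks the uniform lower curvature bound and the explicit shrinking supersolution that are the mathematical content of the paper's proof.
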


We point out that,
in contrast to the classical case, our construction can be made in any dimension, in particular for $n=2$, showing that Grayson's theorem fails in the nonlocal case. It also shows that the distance comparison property for curves for the classical flow proved by Huisken \cite{Hui3} no longer holds in the nonlocal
case. Our result has some analogies with the one of \cite{CFSW}, where Delaunay-type periodic curves in the plane with constant fractional mean curvature are constructed, while in the classical case such objects exist only in dimension $n \geq 3$.

Our results deals with the viscosity solution of \eqref{NMCF} because we lack a local existence result for smooth solutions. However, Theorem \ref{thm-neckpinch} also implies that, if the hypersurface we construct has a local smooth evolution, then it \textit{develops singularities} before shrinking to a point.

A crucial step in the proof of Theorem \ref{thm-neckpinch} is provided by the following result, of independent interest: if a set $E$ is contained in a strip and its boundary $\partial E$ has sufficiently small slope and small classical curvatures, then the fractional mean curvature of $E$ is bounded below by a positive universal constant, which depends only on $s$ and on the dimension $n$. This is the content of Proposition \ref{STIMA:CUR:FLAT}. In particular, a thin set can have all negative classical principal curvatures at some point, but positive fractional mean curvature.

Other recent contributions in the study of the fractional (and more general nonlocal) mean curvature flows are the articles~\cite{CSou, CNR, SaezV}.
 In \cite{CSou}, the convergence of a class of threshold dynamics approximations to moving fronts was established. In particular, threshold dynamics associated to fractional powers of the Laplacian of order $s\in (0,2)$ were considered: interestingly, when $s\in [1,2)$ the resulting interface moves by a (weighted) mean curvature flows, while when $s\in (0,1)$ it moves by a fractional mean curvature flow.
In \cite{CNR}, the results contained in \cite{CSou} have been extended to the anisotropic case, and it was proved that convexity is preserved as in the classical case. Finally, in \cite{SaezV} smooth solutions to the fractional mean curvature flow were studied and the evolution equations for several geometric local and nonlocal quantities were computed. The particular cases of entire graphs and star-shaped surfaces were considered, obtaining striking analogies with the properties of the classical case.

The paper is organized as follows:

\begin{itemize}
\item In Section 2 we describe the level set approach in the study of nonlocal mean curvature flows, we recall the notion of viscosity solutions and the statement of the comparison principle;
\item In Section 3 we establish the estimates on the nonlocal mean curvature of perturbed strips, which will be used in the proof of our main result;
\item In Section 4, we prove our main result Theorem \ref{thm-neckpinch}.
\end{itemize}

\section{Viscosity solutions and comparison principles via the level set approach}

In this Section we recall the notion of viscosity solutions for the fractional mean curvature flow, which is based on the
\textit{level sets approach}. 
The idea is the following: given an initial surface $\mathcal M_0=\partial E_0$, we choose any continuous function $u_0:\R^n\rightarrow \R^n$ such that 
\begin{equation}\label{level-set}\mathcal M_0=\{x\in \R^n\,:\,u_0(x)=0\}.\end{equation}
The geometric equation satisfied by the evolution $\mathcal M_t$ of $\mathcal M_0$ can then be translated into an equation satisfied by a function $u(x,t)$, where $u(x,0)=u_0(x)$ and at each time
\begin{equation}\label{M_t}\mathcal M_t=\{x\in \R^n\,:\,u(\cdot, t)=0\}.\end{equation}
More precisely, the \textit{level set equation} satisfied by $u$ is 
\begin{equation}\label{level-set-eq}
\partial_t u+H^s[x,u(\cdot,t)]|Du(x,t)|=0\quad \mbox{in}\;\;\R^n\times (0,+\infty),
\end{equation}
where $u$ satisfies the initial condition
$$u(x,t)=u_0(x)\quad \mbox{in}\;\;\R^n.$$
Here and in the following we denote by $H_s[x,u(\cdot,t)]$ the fractional mean curvature of the superlevel set of $u(\cdot, t)$ at the point $x$, i.e.
$$H^s[x,u(\cdot,t)]=H^s_{\{y\in \R^n\,:\,u(y,t)>u(x,t)\}}(x).$$
Of course the definition of $\mathcal M_t$ is well posed if one shows that equation \eqref{level-set-eq} has a unique solution, and definition \eqref{level-set} does not depend on the initial choice of the function $u_0$. These basic properties were established in \cite{ES} for the classical mean curvature flow and in \cite{Im} for the fractional one.

Since the nonlocal case is not so standard and for the sake of completeness, we recall here below all the rigorous definitions and the basic results in \cite{Im} (see also \cite{CMP}). 
Let $\mathcal M=\{x\in \R^n\,:\,u(x)=0\}=\partial\{x\in \R^n\,:\,u(x)>0\}$. If $u\in C^{1,1}$ and $Du\neq 0$, we can define the following quantities
\begin{equation}\label{k*}
\begin{split}
k^*[x,\mathcal M]&=k^*[x,u]=\int_{\R^n}\frac{\chi_{\{u(x+z)\geq u(x)\}}(z)  -\chi_{\{u(x+z)<u(x)\}}(z)}{|z|^{n+s}}\,dz,
\\
k_*[x,\mathcal M]&=k_*[x,u]= \int_{\R^n}\frac{\chi_{\{u(x+z)> u(x)\}}(z)  -\chi_{\{u(x+z) \leq u(x)\}}(z)}{|z|^{n+s}}\,dz.
\end{split}
\end{equation}
It is easy to see that if $u\in C^{1,1}$ and its gradient $Du$ does not vanish on $\{z\in \R^n\,:\,u(z)=u(x)\}$, then $k^*$ are finite and
$$k^*[x,u]=k_*[x,u]=-H_s[x,u].$$

We can now give the definition of \textit{viscosity solution} for \eqref{level-set-eq} (see \cite{Im}, Sec. 3).

\begin{definition}
\begin{itemize}
\item[i)] An upper semicontinuous function $u:[0,T]\times \R^n$ is a \textit{viscosity subsolution} of \eqref{level-set-eq} if for every smooth test function $\phi$ such that $u-\phi$ admits a global zero maximum at $(t,x)$, we have
\begin{equation}\label{sub}
\partial_t \phi\leq k^*[x,\phi(\cdot,t)]|D\phi|(x,t)
\end{equation}
if $D\phi(x,t)\neq 0$, and $\partial_t\phi(x,t)\leq 0$ if not.
\item [ii)] A lower semicontinuous function $u:[0,T]\times \R^n$ is a \textit{viscosity supersolution} of \eqref{level-set-eq} if for every smooth test function $\phi$ such that $u-\phi$ admits a global zero minimum at $(t,x)$, we have
\begin{equation}\label{super}
\partial_t \phi\geq k_*[x,\phi(\cdot,t)]|D\phi|(x,t)
\end{equation}
if $D\phi(x,t)\neq 0$, and $\partial_t\phi(x,t)\geq 0$ if not.
\item[iii)] A locally bounded function $u$ is a \textit{viscosity solution} of \eqref{level-set-eq} if its upper semicontinuous envelope is a subsolution and its lower semicontinuous envelope is a supersolution of \eqref{level-set-eq}.
\end{itemize}
\end{definition}

\begin{rmk}{\rm
It is easy to verify that any classical subsolution (respectively supersolution) is in particular a viscosity subsolution (respectively supersolution). 
}\end{rmk}

We can now state the comparison principles, that we will use later on in Section \ref{neckpinch}. 

\begin{prop}[Theorem 2 in \cite{Im}]\label{comparison}
Suppose that the initial datum $u_0$ is a bounded and Lipschitz continuous function. Let $u$ (respectively $v$) be a bounded viscosity subsolution (respectively supersolution) of \eqref{level-set-eq}.

If $u(x,0)\leq u_0(x)\leq v(x,0)$, then $u\leq v$ on $\R^n\times (0,+\infty)$.
\end{prop}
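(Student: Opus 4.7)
The plan is to adapt the Crandall--Ishii--Lions doubling-of-variables method to the nonlocal singular operator $H^s[\cdot,\cdot]$, following the template developed by Imbert (and later refined by Chambolle--Morini--Ponsiglione) for geometric flows driven by nonlocal curvatures. Assume, for contradiction, that $M:=\sup_{\R^n\times[0,T]}(u-v)$ is strictly positive for some finite horizon $T>0$. For parameters $\varepsilon,\alpha,\eta>0$ I would introduce the penalized functional
$$\Phi(x,y,t,s) = u(x,t) - v(y,s) - \frac{|x-y|^2}{2\varepsilon} - \frac{(t-s)^2}{2\varepsilon} - \alpha\bigl(|x|^2+|y|^2\bigr) - \frac{\eta}{T-t} - \frac{\eta}{T-s}.$$
The quadratic decay in $x,y$, together with the boundedness of $u,v$, forces $\Phi$ to attain a global maximum at some $(\hat x,\hat y,\hat t,\hat s)$; for $\alpha,\eta$ small enough this maximum exceeds $M/2$. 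The $\eta/(T-\cdot)$ terms combined with the Lipschitz initial datum $u_0$ keep $\hat t,\hat s$ strictly inside $(0,T)$, and the usual penalization argument yields $|\hat x-\hat y|^2/\varepsilon\to 0$ and $(\hat t-\hat s)^2/\varepsilon\to 0$ as $\varepsilon\to 0$.

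Next, the explicit test function produced by the penalty, namely
$$\phi_1(x,t):=\frac{|x-\hat y|^2}{2\varepsilon}+\frac{(t-\hat s)^2}{2\varepsilon}+\alpha|x|^2+\frac{\eta}{T-t}+\mathrm{const},$$
touches $u$ from above at $(\hat x,\hat t)$, while its mirror $\phi_2(y,s)$ (obtained by exchanging the roles of $x$ and $y$, with the appropriate sign changes) touches $v$ from below at $(\hat y,\hat s)$. Feeding these into \eqref{sub}--\eqref{super}, and writing $p_\varepsilon:=(\hat x-\hat y)/\varepsilon$, the gradients are $p_\varepsilon+2\alpha\hat x$ and $p_\varepsilon-2\alpha\hat y$. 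Provided both are nonzero, subtracting the resulting viscosity inequalities produces
$$\frac{2\eta}{T^2}\;\le\;k^*[\hat x,\phi_1(\cdot,\hat t)]\,|p_\varepsilon+2\alpha\hat x|\;-\;k_*[\hat y,\phi_2(\cdot,\hat s)]\,|p_\varepsilon-2\alpha\hat y|.$$

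The heart of the proof is to show that this right-hand side vanishes in an appropriate iterated limit. After the change of variables $z=x-\hat x$, respectively $z=y-\hat y$, the increments $\phi_1(\hat x+z,\hat t)-\phi_1(\hat x,\hat t)$ and $\phi_2(\hat y+z,\hat s)-\phi_2(\hat y,\hat s)$ differ exactly by $|z|^2/\varepsilon+O(\alpha)$, which is manifestly non-negative for small $\alpha$. This forces the explicit inclusion $\{\phi_2(\hat y+\cdot,\hat s)\ge\phi_2(\hat y,\hat s)\}\subseteq\{\phi_1(\hat x+\cdot,\hat t)\ge\phi_1(\hat x,\hat t)\}$, which translates into $k^*[\hat x,\phi_1(\cdot,\hat t)]\ge k_*[\hat y,\phi_2(\cdot,\hat s)]+o_\alpha(1)$. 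Splitting the singular integrals over a ball $B_\rho$, where the $C^{1,1}$ regularity of the quadratic penalty gives a principal value contribution of order $\rho^{2-s}$, and its complement, where the integrand is dominated by $|z|^{-n-s}\in L^1$, one estimates the difference of the two nonlocal terms by $o_\alpha(1)+o_\rho(1)$. Combined with $|p_\varepsilon+2\alpha\hat x|-|p_\varepsilon-2\alpha\hat y|=O(\alpha)$, an iterated limit $\rho\to 0$, then $\alpha\to 0$, then $\varepsilon\to 0$ drives the right-hand side to $0$, contradicting $2\eta/T^2>0$.

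The step I expect to be the \emph{main obstacle} is the degenerate case in which one of the gradients $p_\varepsilon+2\alpha\hat x$, $p_\varepsilon-2\alpha\hat y$ vanishes at the optimal pair: there the singular integrals $k^*,k_*$ can fail to be finite and only the weak alternative $\partial_t\phi_1\le 0$, $\partial_t\phi_2\ge 0$ from \eqref{sub}--\eqref{super} is available. Showing that this degeneracy does not destroy the contradiction requires the delicate geometric analysis carried out in \cite{Im}: one exploits the quadratic form of the penalty to argue that the vanishing can be bypassed by a slight further perturbation of $\Phi$, after which the nonlocal operator becomes well defined and the previous step yields the contradiction. Since $T>0$ was arbitrary this proves $u\le v$ on $\R^n\times(0,+\infty)$.
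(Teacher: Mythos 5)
This proposition is cited, not proved, in the paper: it is Theorem~2 of \cite{Im}, so there is no in-source proof to compare against line by line. Your sketch follows the doubling-of-variables strategy that Imbert adapts from the Chen--Giga--Goto and Evans--Spruck level-set framework, and you correctly flag the degenerate-gradient case as the chief technical obstacle. The decisive set-inclusion step, however, contains a genuine gap. With $\phi_1(x,t)=\tfrac{|x-\hat y|^2}{2\varepsilon}+\alpha|x|^2+\dots$ and $\phi_2(y,s)=-\tfrac{|\hat x-y|^2}{2\varepsilon}-\alpha|y|^2-\dots$, a direct computation gives
$$
\bigl[\phi_1(\hat x+z,\hat t)-\phi_1(\hat x,\hat t)\bigr]
-\bigl[\phi_2(\hat y+z,\hat s)-\phi_2(\hat y,\hat s)\bigr]
=\frac{1+2\alpha\varepsilon}{\varepsilon}\,|z|^2+2\alpha(\hat x+\hat y)\cdot z ,
$$
which is \emph{not} $|z|^2/\varepsilon+O(\alpha)$ uniformly in $z$: the remainder is $O(\alpha|z|)$, and it is negative for $|z|\lesssim\alpha\varepsilon|\hat x+\hat y|$ pointing opposite to $\hat x+\hat y$. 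Thus the claimed inclusion of superlevel sets $\{\phi_2(\hat y+\cdot)\ge\phi_2(\hat y)\}\subseteq\{\phi_1(\hat x+\cdot)\ge\phi_1(\hat x)\}$ fails precisely near $z=0$, exactly where the kernel $|z|^{-n-s}$ is most singular, so the asserted bound $k^*[\hat x,\phi_1]\ge k_*[\hat y,\phi_2]+o_\alpha(1)$ does not follow from monotonicity of the nonlocal operator alone.

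The repair is to split the singular integral over $B_\rho$ and $\R^n\setminus B_\rho$ \emph{before} invoking any inclusion, not after: inside $B_\rho$ the $C^{1,1}$ regularity of the quadratic test function produces a principal-value error of order $\rho^{2-s}/\varepsilon$ (your account drops the factor $1/\varepsilon$, which is essential for the order in which $\rho,\alpha,\varepsilon$ are sent to their limits), while outside $B_\rho$ the two superlevel sets coincide up to a region whose $|z|^{-n-s}$-weighted measure is controllable as $\alpha\to0$. As written, your ordering of these two moves is circular. Moreover, ``a slight further perturbation of $\Phi$'' is not how the degenerate case $D\phi=0$ is actually handled; one exploits the geometric invariance of \eqref{level-set-eq} under monotone relabellings $u\mapsto\psi\circ u$, which permits normalizing $u,v$ (e.g.\ to bounded $1$-Lipschitz representatives) and using the explicit quadratic structure of the penalization, in the spirit of Chen--Giga--Goto. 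The skeleton is the right one, but the two hard estimates are asserted rather than established; the complete argument is in \cite{Im}.
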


In Theorem 3 of \cite{Im}  existence and uniqueness of viscosity solutions of \eqref{level-set-eq} were proven. The prove of existence uses
Perron's method, while uniqueness relies on the comparison principle stated in Proposition \ref{comparison}.
Finally in \cite{Im}, Theorem 6, the consistency of Definition \eqref{M_t} is established, showing that if $u$ and $v$ are two viscosity solutions of \eqref{level-set-eq} with two different initial data $u_0$ and $v_0$ which have the same zero-level set, then for every time $t>0$ also $u(\cdot, t)$ and $v(\cdot, t)$ have the same zero level set.

The uniqueness and the consistency results allow to define the fractional mean curvature flow for $\mathcal M_t$ by using the solution $u(\cdot,t)$ of \eqref{level-set-eq}.

\section{The nonlocal curvature of perturbed strips}

The goal of this section is to give estimates
on the nonlocal curvature of sets with small slopes
and small classical curvatures which are contained in a strip.
The result is rather general, but we focus on
a particular case for the sake of concreteness:

\begin{prop}\label{STIMA:CUR:FLAT}
Let~$\kappa$, $\eta>0$.
Let~$E_-$, $E_+\subset\R^n$. Assume that~$E_+\cap E_-=\varnothing$, that
\begin{equation}\label{CONTE}
E_- \supseteq \{ x_n \le -1 \} \quad {\mbox{and }}\quad
E_+ \supseteq \{ x_n \ge 1 \}.
\end{equation}
Suppose also that the boundaries of~$E_-$ and~$E_+$
are of class~$C^2$, with classical directional curvatures
bounded by~$\kappa$.

In addition, let~$\nu_-$ and~$\nu_+$ be the exterior
normals of~$E_-$ and~$E_+$ respectively, and
assume that
\begin{equation}\label{CONTE:0}
|\nu_-\cdot e_i|\le \eta \quad{\mbox{ and }}\quad
|\nu_+\cdot e_i|\le \eta \quad{\mbox{ for every }} i\in\{1,\dots,n-1\}.
\end{equation}
Let~$E:= \R^n\setminus (E_-\cup E_+)$. Then, 
there exist~$c_0$, $\kappa_0$, $\eta_0>0$, depending on~$n$ and~$s$,
such that for any~$x\in\partial E$
$$ H^s_E (x) \ge c_0,$$
provided that~$\kappa\in [0,\,\kappa_0]$ and~$\eta\in[0,\,\eta_0]$.
\end{prop}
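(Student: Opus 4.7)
The plan is to compare $H^s_E(x)$ with the fractional mean curvature of the tangent half-space to $\partial E_+$ at $x$. Without loss of generality, assume $x\in\partial E_+$ (the case $x\in\partial E_-$ is symmetric). Let $\nu = \nu_+(x)$; by \eqref{CONTE:0} and the fact that $E_+$ sits locally on the $-e_n$ side of $x$, one has $\nu = -e_n + w$ with $|w|\le C_n\eta$ and $\nu_n<0$. Set $H := \{y\in\R^n : (y-x)\cdot\nu > 0\}$, the tangent half-space on the side where $E$ locally lies. Since $H^s_H(x)=0$ (the integrand is anti-symmetric under $y\mapsto 2x-y$), subtracting the two curvature integrals yields
\begin{equation*}
H^s_E(x) \;=\; 2\int_{\R^n}\frac{\chi_H(y)-\chi_E(y)}{|x-y|^{n+s}}\,dy \;=\; 2A - 2B,
\end{equation*}
with $A:=\int_{H\setminus E}|x-y|^{-n-s}\,dy$ and $B:=\int_{E\setminus H}|x-y|^{-n-s}\,dy$.

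To lower-bound $A$, I would isolate the downward cone
\begin{equation*}
K := \bigl\{y\in\R^n : x_n - y_n \ge |y'-x'|,\ y_n \le -1\bigr\}.
\end{equation*}
For $y\in K$, $|y-x|\le\sqrt 2(x_n-y_n)$, so $(y-x)\cdot\nu \ge (x_n-y_n) - |w|\,|y-x| \ge (x_n-y_n)(1-C_n\sqrt 2\,\eta) > 0$ once $\eta\le\eta_0$ is small enough, giving $K\subseteq H$. Also $K\subseteq\{y_n\le -1\}\subseteq E_-\subseteq\mathcal C E$ by \eqref{CONTE}, hence $K\subseteq H\setminus E$. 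Switching to $u = x_n - y_n \ge x_n+1$ and $v = y'-x'$ with $|v|\le u$, then rescaling $v = ut$, give
\begin{equation*}
A \;\ge\; \int_K\frac{dy}{|x-y|^{n+s}} \;=\; \frac{D_n}{s\,(x_n+1)^s} \;\ge\; \frac{D_n}{s\,2^s},
\end{equation*}
where $D_n := \int_{|t|\le 1}(1+|t|^2)^{-(n+s)/2}\,dt > 0$ and we used $x_n\le 1$.

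To upper-bound $B$, write $\partial E_+$ globally as the graph $y_\nu = \phi(y_\tau)$ over the tangent hyperplane to $\partial E_+$ at $x$, so that $\phi(0)=0$ and $\nabla\phi(0)=0$. The curvature and slope hypotheses translate into $|D^2\phi|\le C_n\kappa$ and $|\nabla\phi|\le C_n\eta$ everywhere, hence by Taylor's formula and the mean value theorem
\begin{equation*}
|\phi(y_\tau)|\le \tfrac{C_n\kappa}{2}|y_\tau|^2 \quad\text{and}\quad |\phi(y_\tau)|\le C_n\eta\,|y_\tau| \quad\text{for every }y_\tau\in\R^{n-1}.
\end{equation*}
Since $E\subseteq\mathcal C E_+ = \{y_\nu > \phi(y_\tau)\}$, the set $E\setminus H\subseteq\{\phi(y_\tau) < y_\nu\le 0\}$. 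Bounding the radial integral in $y_\nu$ by $|\phi(y_\tau)|/|y_\tau|^{n+s}$ and splitting at $|y_\tau|=1$ gives
\begin{equation*}
B \;\le\; \int_{|y_\tau|<1}\frac{C_n\kappa|y_\tau|^2/2}{|y_\tau|^{n+s}}\,dy_\tau + \int_{|y_\tau|\ge 1}\frac{C_n\eta|y_\tau|}{|y_\tau|^{n+s}}\,dy_\tau \;\le\; C'_n(\kappa+\eta).
\end{equation*}

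Combining, $H^s_E(x)\ge 2(A-B)\ge 2D_n/(s\,2^s) - 2C'_n(\kappa+\eta)$. Choosing $\kappa_0, \eta_0$ so small that $C'_n(\kappa_0+\eta_0) < D_n/(2s\,2^s)$ gives the desired $H^s_E(x)\ge c_0 := D_n/(s\,2^s) > 0$, depending only on $n$ and $s$. The main obstacle is the uniform control of $B$: since $y_\tau$ ranges over the whole $\R^{n-1}$, the quadratic $C^2$-bound on $\phi$ fails to be integrable at infinity for $s<1$, while the linear slope bound fails to be integrable at the origin, so the two must be combined on complementary scales. The slope hypothesis is also used in an essential way to keep the downward cone $K$ inside the tilted half-space~$H$.
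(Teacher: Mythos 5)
Your approach is elegant and is close in spirit to the paper's: you subtract the (vanishing) nonlocal curvature of the tangent half-space $H$, lower-bound the "good" contribution $A$ by integrating over a cone contained in $E_-\subseteq\mathcal C E$, and upper-bound the "bad" contribution $B$ from the region trapped between $\partial E_+$ and the tangent plane. The cone lower bound for $A$ and the $2A-2B$ identity are both correct.

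However, there is a genuine gap in the estimate of $B$. You write $\partial E_+$ \emph{globally} as a graph $y_\nu=\phi(y_\tau)$ over the tangent hyperplane at $x$, with $|D^2\phi|\le C_n\kappa$ and $|\nabla\phi|\le C_n\eta$ everywhere, and then deduce $E\setminus H\subseteq\{\phi(y_\tau)<y_\nu\le 0\}$ from $E\subseteq\mathcal C E_+=\{y_\nu>\phi\}$. None of the hypotheses guarantees this global graph representation, nor the identity $\mathcal C E_+=\{y_\nu>\phi\}$: the proposition only assumes $C^2$ regularity, bounded curvature, small slope, and the containments \eqref{CONTE}. The hypersurface $\partial E_+$ may have several connected components. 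For a concrete obstruction, take $E_-=\{x_n<-1\}$ and $E_+=\{x_n>0.6\}\cup\{0.1<x_n<0.5\}$; then $\kappa=\eta=0$, and at $x$ with $x_n=0.1$ the region $E\setminus H$ contains the full slab $\{0.5\le y_n\le 0.6\}$, so that $B$ is a fixed positive constant rather than $O(\kappa+\eta)$. Your bound $B\le C_n'(\kappa+\eta)$ therefore fails, even though the conclusion of the proposition still holds in this example (the positive contribution $A$ wins).

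The fix is to localize, which is exactly what the paper does. Instead of a global graph, one uses (as in Lemma~\ref{SP:SACL}) that $\partial E_+\cap B_\rho(x)$ is trapped between paraboloids of opening $C\kappa$ after a rotation of size $C\eta$; this yields the quadratic bound on $B$ restricted to $B_\rho(x)$. The contribution to $B$ from $E\setminus H$ outside $B_\rho(x)$ is then controlled crudely by $\int_{\R^n\setminus B_\rho(x)}|x-y|^{-n-s}\,dy=C\rho^{-s}$, which can be made smaller than the fixed cone contribution $A$ by first choosing $\rho$ large and only then choosing $\kappa_0,\eta_0$ small (with $\eta_0$ also ensuring $\rho<1/\eta$ so that the tilt of the rotation is negligible on $B_\rho$). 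With that modification, the near part gives $O(\kappa\rho^{2-s})$, the far part $O(\rho^{-s})$, and your argument goes through in the same order of quantifiers as the paper.
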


Before giving the proof of Proposition \ref{STIMA:CUR:FLAT}, we state the following Lemma that will be useful later.
Basically, it states that a set, whose boundary has small enough slopes and classical curvatures bounded by $\kappa$, can be trapped between paraboloids of opening a multiple of $\kappa$.  

\begin{lem}\label{SP:SACL}
Let~$\kappa$, $\eta>0$ and~$G\subset\R^n$, with
boundary of class~$C^2$, with classical directional curvatures
bounded by~$\kappa$ and exterior
normal~$\nu$ such that~$|\nu\cdot e_i|\le \eta$ for any~$i\in\{1,\dots,n-1\}$.

Suppose also that~$0\in\partial G$.
Then, fixed~$\rho>0$, there exist~$\eta_0>0$,
depending on~$n$, and~$\kappa_0(\rho)$, depending on~$n$ and~$\rho$,
such that, if~$\kappa\in[0,\,\kappa_0(\rho)]$ and~$
\eta\in[0,\,\eta_0]$, then
$$ {\mathcal{R}}({{G}}\cap B_\rho\big)\subseteq
\{ x_n\le C\kappa\,|x'|^2\},$$
for a suitable rotation~${\mathcal{R}}$, which differs from the identity
less than~$C\eta$, for a suitable~$C>0$, depending on~$n$.
\end{lem}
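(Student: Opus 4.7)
The plan is to work in coordinates where $\partial G$ is a graph over $\{x_n=0\}$ and exploit the Hessian bound via Taylor's formula. Since $0\in\partial G$ and $|\nu(0)\cdot e_i|\le\eta$ for $i<n$, we have $|\nu(0)\cdot e_n|\ge\sqrt{1-(n-1)\eta^2}$, so (up to swapping $G$ with its complement, which only changes the sign of $\nu$) we may take $\nu(0)=e_n+O(\eta)$. A rotation $\mathcal{R}$ sending $\nu(0)$ to $e_n$ may then be chosen to differ from the identity by at most $C\eta$ with $C=C(n)$; replacing $G$ by $\mathcal{R}(G)$, we reduce to the case $\nu(0)=e_n$.

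Around the origin the implicit function theorem then produces a $C^2$ function $f$ with $f(0)=0$, $\nabla f(0)=0$ that describes $\partial G$ as the graph $\{x_n=f(x')\}$. The hypothesis on $\nu$ translates into $|\nabla f|\le C\eta$ on the domain of definition, and, after absorbing the bounded factor $\sqrt{1+|\nabla f|^2}$, the bound on the classical directional curvatures of $\partial G$ translates into $|D^2 f|\le C\kappa$. I would then extend $f$ to the full horizontal ball of radius $\rho$ by a standard continuation argument: while $|x'|\le\rho$, integrating $|D^2 f|\le C\kappa$ from $\nabla f(0)=0$ gives $|\nabla f(x')|\le C\kappa\rho$, and integrating again gives $|f(x')|\le C\kappa\rho^2/2$, so the graph stays inside $B_\rho$. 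If $\kappa_0(\rho)\rho$ is chosen small enough, both estimates close up and $f$ is defined throughout $\{|x'|<\rho\}$; this is exactly the source of the $\rho$-dependence of $\kappa_0$, whereas $\eta_0$ only needs to be a small absolute constant.

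With $f(0)=0$, $\nabla f(0)=0$ and $|D^2 f|\le C\kappa$ on this domain, Taylor's formula yields
\begin{equation*}
|f(x')|\le\tfrac{C\kappa}{2}|x'|^2.
\end{equation*}
Because $\nu(0)=e_n$ points outward, $G$ lies locally below its boundary graph, and therefore
\begin{equation*}
\mathcal{R}(G\cap B_\rho)\subseteq\{x_n\le f(x')\}\subseteq\{x_n\le C\kappa|x'|^2\},
\end{equation*}
which is the asserted inclusion.

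\textbf{Expected obstacle.} The only genuinely delicate point is the extension of the graph representation from a neighborhood of $0$ to the entire ball $B_\rho$: one must run a continuation argument that controls $|\nabla f|$ and $|f|$ simultaneously from the single pointwise bound $|D^2 f|\le C\kappa$, and this is precisely what forces $\kappa_0$ to depend on $\rho$ (while $\eta_0$ depends only on~$n$).
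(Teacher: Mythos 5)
Your argument is correct and establishes the lemma, but it takes a genuinely different route from the paper's. The paper first observes (invoking the Implicit Function Theorem, with no further detail) that the statement holds for some \emph{universal} small radius~$\rho_0$, and then reduces the case of general~$\rho$ to this one by the dilation $G_\rho := (\rho_0/\rho)\,G$: the slope of the normal is scale-invariant, while the classical curvatures scale like~$\rho/\rho_0$, so the hypotheses pass to~$G_\rho$ in~$B_{\rho_0}$ provided $\kappa \le (\rho_0/\rho)\,\kappa_0(\rho_0)$; moreover the paraboloid $\{x_n\le C\kappa|x'|^2\}$ is exactly compatible with this rescaling, since dilating back by~$\rho/\rho_0$ multiplies the opening $C(\rho/\rho_0)\kappa$ found for~$G_\rho$ by~$\rho_0/\rho$, returning~$C\kappa$. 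You instead spell out the graph representation and the continuation argument directly over the full $\rho$-ball, which is precisely the work the paper packs into the ``universal~$\rho_0$'' step and then sidesteps at general scales by rescaling. The paper's version is slicker and localizes the delicate extension at a single fixed scale; yours has the merit of making the origin of the $\rho$-dependence of~$\kappa_0$ fully explicit. One small caveat: the remark that one may ``swap $G$ with its complement'' to normalize $\nu(0)\approx e_n$ does not go through as stated, since replacing $G$ by $\mathcal C G$ changes the set appearing in the conclusion; in fact the requirement that $\mathcal R$ be within $C\eta$ of the identity forces $\nu(0)\cdot e_n>0$, which is the regime in which the lemma is actually invoked in the proof of Proposition~\ref{STIMA:CUR:FLAT}.
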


\begin{proof} We notice that the result is true for some~$\rho_0$
universal, due to the Implicit Function Theorem,
and so for all~$\rho\in(0,\rho_0]$.

Let now~$\rho>\rho_0$. Let
$$ G_\rho := \frac{\rho_0}{\rho}\,G=
\{ y\in\R^n {\mbox{ s.t. }} \rho \,\rho_0^{-1}\,y\in G\}.$$
Notice that the slope of the normal of~$G_\rho$ is the same as the one of~$G$,
and so it is bounded by~$\eta$. On the other hand,
the curvatures of~$G_\rho$ are~$\rho\,\rho_0^{-1}$ times the curvatures of~$G$,
and therefore are bounded by~$\rho\,\rho_0^{-1}\,\kappa$.

Accordingly, if~$\eta\in [0,\,\eta_0]$ and~$\rho\,\rho_0^{-1}\,\kappa\in
[0,\,\kappa_0(\rho)]$ (i.e. if~$\kappa\in[0,\,\rho_0\,\rho^{-1}\,\kappa_0(\rho_0)]$),
we have that the claim holds true for~$G_\rho$ in~$B_{\rho_0}$, namely
$$ {\mathcal{R}}({{G}}_\rho\cap B_{\rho_0}\big)\subseteq
\{ y_n\le C\,\rho\,\rho_0^{-1}\,\kappa\,|y'|^2\},$$
where ${\mathcal{R}}$ is a rotation as in the statement of
Lemma~\ref{SP:SACL}.

Using the change of scale~$x:=\rho\,\rho_0^{-1}\,y$,
this gives that
$$ {\mathcal{R}}({{G}}\cap B_{\rho}\big)=\frac{\rho}{\rho_0}\,
{\mathcal{R}}({{G}}_\rho\cap B_{\rho_0}\big)
\subseteq\frac{\rho}{\rho_0}\,
\{ y_n\le C\,\rho\,\rho_0^{-1}\,\kappa\,|y'|^2\}=\{ x_n\le C\kappa\,|x'|^2\},
$$
as desired.
\end{proof}

\begin{proof}[Proof of Proposition \ref{STIMA:CUR:FLAT}] We fix~$x\in\partial E$. Without loss of
generality, we may suppose that~$x\in\partial E_+$.
We use the notation~$\nu:=\nu_+(x)$.
Also, we set~$\chi:=\chi_E-\chi_{\mathcal C E}$. 
For short, we also denote by~$C$ a positive constant,
depending on~$n$ and~$s$, which we take the liberty of modifying
from line to line.

We consider
the two slabs
\begin{eqnarray*}
&& S := \{ p\in\R^n {\mbox{ s.t. }} |(p-x)\cdot\nu|\le 4\}
\\{\mbox{and }} && S_\star
:= \{ p\in\R^n {\mbox{ s.t. }} |p\cdot e_n|\le 1\}.
\end{eqnarray*}
Notice that, by~\eqref{CONTE},
\begin{equation}\label{CONTE:2}
E\subseteq S_\star.
\end{equation}
We also observe that
\begin{equation}\label{CONTE:3}
S_\star\setminus S\subseteq \R^n\setminus B_{C/\eta}(x).
\end{equation}
Indeed, if~$p\in S_\star\setminus S$,
we have that~$|(p-x)\cdot\nu|> 4$
and~$|p\cdot e_n|\le 1$. Thus, we recall~\eqref{CONTE:0}
and~\eqref{CONTE:2},
we write~$\nu=(\nu\cdot e_1)e_1+\dots+(\nu\cdot e_n)e_n$
and we find that
\begin{eqnarray*}
4 &<& \left| (p-x)\cdot \sum_{i=1}^n (\nu\cdot e_i)e_i\right|\\
&\le& \eta\,\sum_{i=1}^{n-1} |(p-x)\cdot e_i|
+|(p-x)\cdot e_n| \\
&\le& \eta\,(n-1)\,|p-x| + |p\cdot e_n|+|x\cdot e_n| \\
&\le& \eta\,(n-1)\,|p-x| + 1+1,
\end{eqnarray*}
which implies~\eqref{CONTE:3} as long as~$\eta$ is sufficiently small.

Then, in view of~\eqref{CONTE:2}
and~\eqref{CONTE:3}, we have that
$$ E\setminus S\subseteq
S_\star\setminus S\subseteq \R^n\setminus B_{C/\eta}(x)$$
and therefore
\begin{equation}\label{CIxS:00:1}
\int_{E\setminus S} \frac{dy}{|x-y|^{n+s}}\le
\int_{ \R^n\setminus B_{C/\eta}(x) } \frac{dy}{|x-y|^{n+s}}
\le C\eta^s.
\end{equation}
Now we set
$$ G:= \{ p\in B_{10}(x) {\mbox{ s.t. }} (p-x)\cdot \nu > 4 \}.$$
Notice that~$G$ is a circular sector, so, in particular, we
have that
\begin{equation}\label{uj4}
|G|\ge C.\end{equation}
Also, by construction,
\begin{equation}\label{uj5}
G\cap S=\varnothing.
\end{equation}
We claim that
\begin{equation}\label{uj6}
G\subseteq \R^n\setminus E.
\end{equation}
To check this, let~$p\in G$. Then, by~\eqref{CONTE:0} and~\eqref{CONTE:2},
\begin{eqnarray*}
4 &<& \sum_{i=1}^n (\nu\cdot e_i)(p-x)\cdot e_i\\
&\le& \eta\,(n-1)\,|p-x|+(p-x)\cdot e_n \\
&\le& 10\eta\,(n-1) + p\cdot e_n +1,
\end{eqnarray*}
and so, if~$\eta$ is sufficiently small, we deduce that~$p\cdot e_n
\ge 2$. Hence, we have that~$p\in E_+$, which in turn implies~\eqref{uj6}.

Now, from~\eqref{uj5}
and~\eqref{uj6}, we get that
$$ (\R^n\setminus E)\setminus S\;\supseteq\;
G\setminus S\;=\; G.$$
Notice also that if~$y\in G$ then~$|x-y|\le 10$. As a consequence
of these observations and~\eqref{uj4},
we infer that
\begin{equation*}
\int_{(\R^n\setminus E)\setminus S} \frac{dy}{|x-y|^{n+s}}\ge 
\int_{G} \frac{dy}{|x-y|^{n+s}}\ge \frac{|G|}{10^{n+s}}\ge C.
\end{equation*}
Combining this with~\eqref{CIxS:00:1}, we conclude that
\begin{equation}\label{CIxS:00:2}
\int_{\R^n\setminus S} \frac{\chi(y)\,dy}{|x-y|^{n+s}}=\int_{\R^n\setminus S} \frac{\chi_E(y)-\chi_{\mathcal C E}(y)}{|x-y|^{n+s}}\,dy\le C\eta^s - C_\star.
\end{equation}
Here we denote by~$C_\star>0$ a ``special'' constant
(to distinguish it from the ``other'' constants
just denoted by~$C$): indeed, $C_\star$ will produce
the desired result after small perturbations, as we will see in the sequel.

Now we analyze the contributions inside~$S$. For this purpose,
we take an additional parameter
\begin{equation}\label{KA:INT}
\rho\in \left(0,\; \,\min\left\{|\log\kappa|,\,\frac1\eta\right\}\, \right).
\end{equation}
Such~$\rho$ will be taken appropriately large (in dependence of~$C_\star$):
then~$\kappa$ and~$\eta$
have to be chosen sufficiently small, in such a way
that~$\rho$ belongs to the interval stated in~\eqref{KA:INT}. That is, the parameter~$\rho$
is taken large with respect to the constants, then~$\kappa$ and~$\eta$
will be taken to be small in dependence on the constants
and on~$\rho$.

Thanks to the curvature assumption on~$\partial E_+$,
we know that the boundary of~$E_+$ inside~$B_\rho(x)$
is trapped between paraboloids of opening~$C\kappa$:
more precisely, by Lemma \ref{SP:SACL} there exists a rigid motion~${\mathcal{R}}$
with~${\mathcal{R}}(x)=0$ and such that
\begin{equation}\label{9ugAHHHfA}
\begin{split}&
{\mathcal{R}} \big( E_+\cap B_\rho(x)\big) \supseteq
\left\{ p=(p',p_n)\in B_\rho {\mbox{ s.t. }} p_n \ge C\kappa\,|p'|^2
\right\}=:{\mathcal{P}}_1,\\ &
{\mathcal{R}} \big( (\partial E_+)\cap B_\rho(x)\big) \subseteq
\left\{ p=(p',p_n)\in B_\rho {\mbox{ s.t. }} |p_n| \le {C\kappa\,|p'|^2}
\right\}=:{\mathcal{P}}_2\\
{\mbox{and }}\quad& {\mathcal{R}} \big( E_+\cap B_\rho(x)\big)\subseteq
{\mathcal{P}}_1\cup{\mathcal{P}}_2.
\end{split}\end{equation}
Notice that~${\mathcal{R}}$ is the composition of a translation
that sends~$x$ to~$0$ and then a rotation which sets~$\nu$
in the vertical direction. Therefore, we can write~${\mathcal{R}}(y):=
{\mathcal{R}}_\star \,(y-x)$, where~${\mathcal{R}}_\star$ is a rotation
which differs from the identity less than~$C\eta$, due to~\eqref{CONTE:0}.

\begin{figure}[h]
	\centering

	\resizebox{10cm}{!}{
	
		\begin{tikzpicture}[scale=1]
		
			\node (myfirstpic) at (0,0) {\includegraphics[scale=0.6]{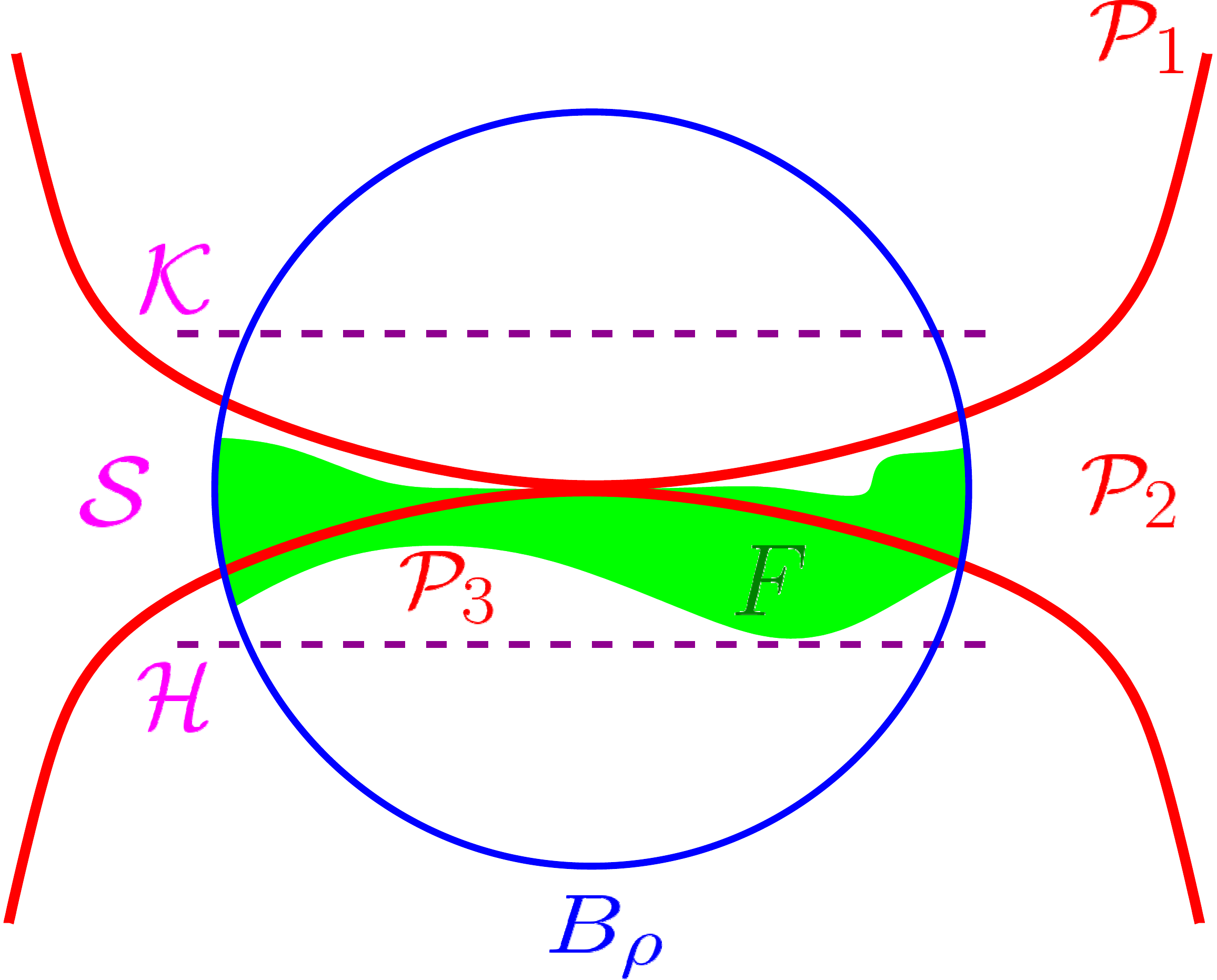}};
			

		\end{tikzpicture}
		
	}
	\caption{}
\end{figure}

We set
\begin{eqnarray*}&& F:={\mathcal{R}} \big( E\cap B_\rho(x)\big),\\
&& {\mathcal{H}}:=
\left\{ p=(p',p_n)\in B_\rho {\mbox{ s.t. }} p_n <-4
\right\},
\\ && {\mathcal{K}}:=
\left\{ p=(p',p_n)\in B_\rho {\mbox{ s.t. }} p_n >4
\right\}
\\{\mbox{and }}&& {\mathcal{P}}_3:=B_\rho\setminus
\big({\mathcal{P}}_1\cup{\mathcal{P}}_2\cup{\mathcal{H}}\big).
\end{eqnarray*}

By construction, we see that
\begin{equation}\label{SUR:SP}
{\mathcal{R}}(S\cap B_\rho(x))=B_\rho\setminus({\mathcal{H}}\cup{\mathcal{K}})=
\left\{ p=(p',p_n)\in B_\rho {\mbox{ s.t. }} |p_n| \le4
\right\}=:{\mathcal{S}}.
\end{equation}
We also observe that
\begin{equation} \label{FUINVL}
{\mathcal{P}}_3\cap {\mathcal{S}} =
-({\mathcal{P}}_1\cap{\mathcal{S}}),\end{equation}
up to sets of null measure.
To check this, let~$p\in {\mathcal{P}}_3\cap {\mathcal{S}}$
and~$q:=-p$. Then we have that~$|q_n|=|p_n|\le4$, thus~$q\in{\mathcal{S}}$.
Furthermore, since~$p\not\in {\mathcal{P}}_1\cup {\mathcal{P}}_2$,
we have that~$p_n<-C\kappa\,|p'|^2$. Accordingly,
$$ q_n =-p_n>C\kappa\,|p'|^2 =C\kappa\,|q'|^2,$$
and so~$ q\in {\mathcal{P}}_1$.
This shows that~$q\in{\mathcal{P}}_1\cap{\mathcal{S}}$, and so
\begin{equation}\label{IOKHAYjaA}
{\mathcal{P}}_3\cap {\mathcal{S}} \subseteq
-({\mathcal{P}}_1\cap{\mathcal{S}}).
\end{equation}
Now, let us take~$q\in ({\mathcal{P}}_1\cap{\mathcal{S}})$
and set~$p:=-q$. We have that~$|p_n|=|q_n|\le4$,
hence~$p\in {\mathcal{S}}$. Moreover,
$$ p_n =-q_n\le -C\kappa\,|q'|^2 =-C\kappa\,|p'|^2.$$
Since we can remove the case in which~$p_n=-C\kappa\,|p'|^2$
(being this a set of null measure), we can conclude that~$p_n<
-C\kappa\,|p'|^2$ and so~$p\not\in{\mathcal{P}}_1\cup{\mathcal{P}}_2$.
This says that
$$ p\in {\mathcal{S}}\setminus\big( {\mathcal{P}}_1\cup{\mathcal{P}}_2\big)
\subseteq \big( B_\rho\setminus{\mathcal{H}}\big)\setminus
\big( {\mathcal{P}}_1\cup{\mathcal{P}}_2\big)
\subseteq B_\rho\setminus
\big( {\mathcal{P}}_1\cup{\mathcal{P}}_2\cup{\mathcal{H}}\big)
={\mathcal{P}}_3.$$
Hence~$p\in{\mathcal{P}}_3\cap{\mathcal{S}}$, up to negligible sets.
Thus, we have shown that
$$ -({\mathcal{P}}_1\cap{\mathcal{S}})\subseteq
{\mathcal{P}}_3\cap {\mathcal{S}} ,$$
up to negligible sets. Combining this with~\eqref{IOKHAYjaA},
we conclude the proof of~\eqref{FUINVL}.

Also, we notice that
\begin{equation}\label{SUR}
{\mathcal{H}}\cap \big({\mathcal{P}}_1\cup{\mathcal{P}}_2\big)=\varnothing
\quad{\mbox{ and }}\quad
{\mathcal{K}}\subseteq {\mathcal{P}}_1.
\end{equation}
Indeed, if~$p\in
\big({\mathcal{P}}_1\cup {\mathcal{P}}_2\big)$, then
$$ p_n \ge -{C\kappa\,|p'|^2} \ge -C\kappa\rho^2\ge -1$$
if~$\kappa$ sufficiently small (possibly in dependence of~$\rho$),
hence~$p\not\in{\mathcal{H}}$.
Similarly, if~$p\in {\mathcal{K}}$ then
$$ p_n -C\kappa\,|p'|^2\ge 4-C\kappa\rho^2>0$$
and so~$p\in{\mathcal{P}}_1$. These observations prove~\eqref{SUR}.

We also point out that
\begin{equation} \label{IJ5678-UAYT}
F =
{\mathcal{R}} \big( E\cap B_\rho(x)\big)
\subseteq
{\mathcal{R}} \big( (\R^n\setminus E_+)\cap B_\rho(x)\big) 
= \R^n\setminus\Big(
{\mathcal{R}} \big( E_+\cap B_\rho(x)\big) \Big)
\subseteq\R^n\setminus {\mathcal{P}}_1,\end{equation}
thanks to~\eqref{9ugAHHHfA}, and therefore
\begin{equation}\label{ikAIII:A}
B_\rho\setminus F\supseteq {\mathcal{P}}_1.\end{equation}
In addition, we claim that
\begin{equation}\label{IND:FF}
F\subseteq {\mathcal{P}}_2\cup {\mathcal{P}}_3.
\end{equation}
Indeed, if~$p\in F$, then~$p={\mathcal{R}}(y)=
{\mathcal{R}}_\star \,(y-x)$ for some~$y\in E$. Therefore,
if~$I_n\in{\rm Mat}(n\times n)$ denotes the identity matrix,
recalling~\eqref{CONTE:2}
we have that
\begin{eqnarray*} 
p_n &=& \big({\mathcal{R}}_\star\,(y-x)\big)\cdot e_n
\\ &\ge& (y-x)\cdot e_n -
\Big| \big(({\mathcal{R}}_\star-I_n)\,(y-x)\big)\cdot e_n\Big| \\
&\ge& -2 -\eta\rho
\\ &>& -4,
\end{eqnarray*}
provided that~$\eta$ is sufficiently small (possibly in dependence of~$\rho$).
This shows that~$p\not\in{\mathcal{H}}$.
As a consequence, we see that
\begin{equation*}
p\in{\mathcal{P}}_1\cup{\mathcal{P}}_2\cup{\mathcal{P}}_3.\end{equation*}
This and~\eqref{IJ5678-UAYT}
establish the validity of~\eqref{IND:FF}.

Now, we use the change of variable~$z={\mathcal{R}}(y)$ and formulas
\eqref{SUR:SP}, \eqref{ikAIII:A} and~\eqref{IND:FF}
to see that
\begin{equation}\label{2IaAJLLAK}
\begin{split}
\int_{S\cap B_\rho(x)} \frac{\chi(y)\,dy}{|x-y|^{n+s}}
\,&=\int_{ {\mathcal{S}} } \frac{\chi_F(z)-\chi_{B_\rho\setminus F}(z)\,dz}{
|z|^{n+s}} \\
&\le 
\int_{ {\mathcal{P}}_2 } \frac{dz}{|z|^{n+s}}
+\int_{ {\mathcal{P}}_3\cap{\mathcal{S}} } \frac{dz}{|z|^{n+s}}
-\int_{ {\mathcal{P}}_1\cap{\mathcal{S}} } \frac{dz}{|z|^{n+s}}.
\end{split}
\end{equation}
Moreover, from~\eqref{FUINVL} and a reflection in the vertical variable, we
have that
$$ \int_{ {\mathcal{P}}_3\cap{\mathcal{S}} } \frac{dz}{|z|^{n+s}}=
\int_{ {\mathcal{P}}_1\cap{\mathcal{S}} } \frac{dz}{|z|^{n+s}}. $$
Accordingly, \eqref{2IaAJLLAK} becomes
\begin{equation}\begin{split}
\label{98uhAaasdGH}
& \int_{S\cap B_\rho(x)} \frac{\chi(y)\,dy}{|x-y|^{n+s}}
\le\int_{ {\mathcal{P}}_2 } \frac{dz}{|z|^{n+s}}
\\ &\qquad\le \int_{ |z'|\le \rho }\,dz'\;
\int_{|z_n|\le C\kappa\,|z'|^2} \,dz_n\; |z'|^{-n-s}
=C\kappa\,\int_{ |z'|\le \rho }\,dz' \;|z'|^{2-n-s}
\\ &\qquad=C\kappa \rho^{2-s}.
\end{split}\end{equation}
On the other hand, we have that
\begin{equation*}
\int_{S\setminus B_\rho(x)} \frac{\chi(y)\,dy}{|x-y|^{n+s}}\le
\int_{\R^n\setminus B_\rho(x)} \frac{dy}{|x-y|^{n+s}}
= \frac{C}{\rho^s} .\end{equation*}
This and~\eqref{98uhAaasdGH} yield
$$ \int_{S} \frac{\chi(y)\,dy}{|x-y|^{n+s}}\le C\,\left(
\kappa \rho^{2-s}+\frac{1}{\rho^s}
\right).$$
Combining this with~\eqref{CIxS:00:2}, we see that
$$ -H^s_E (x)=\int_{\R^n\setminus S} \frac{\chi(y)\,dy}{|x-y|^{n+s}}\le
C\,\left(\eta^s +\kappa \rho^{2-s}+\frac{1}{\rho^s}
\right) - C_\star \le 
C\,\left(\eta^s +\kappa \rho^{2-s}
\right) - \frac{C_\star}{2},$$
provided that~$\rho$ is sufficiently large.

Hence, if~$\eta$ and~$\kappa$ are small enough, 
we obtain that~$-H^s_E (x)\le -C_\star/4$,
as desired.
\end{proof}

\begin{cor}\label{arctan}
Let~$\epsilon,\,\delta>0$ and
$$ E:= \left\{ x=(x',x_n)\in\R^n {\mbox{ s.t. }}
|x_n|< \epsilon + \frac{2}{\pi}\,\arctan \big(\delta \,|x'|^2\big)
\right\} . $$
Then, if~$\epsilon$ and $\delta$ are sufficiently small, we have that
$$ \inf_{x\in \partial E} H^s_E(x)\geq c_0 > 0, $$
for some $c_0$ depending only on $s$ and $n$.

\end{cor}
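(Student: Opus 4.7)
My plan is to deduce Corollary~\ref{arctan} directly from Proposition~\ref{STIMA:CUR:FLAT} after verifying the geometric hypotheses for the set $E$. Write $f(x') := \epsilon + \tfrac{2}{\pi}\arctan(\delta |x'|^2)$, and decompose the complement of $E$ as $E_+ \cup E_-$, with $E_+ := \{x_n \ge f(x')\}$ and $E_- := \{x_n \le -f(x')\}$. These are disjoint, their boundaries are smooth graphs, and $\partial E = \partial E_+ \cup \partial E_-$. Since $\arctan \le \pi/2$, we have $f \le 1+\epsilon$, so $E \subseteq \{|x_n|<1+\epsilon\}$.

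Proposition~\ref{STIMA:CUR:FLAT} is normalized to the strip $\{|x_n|\le 1\}$, so I would first rescale by $\lambda := 1/(1+\epsilon)$, setting $\tilde E := \lambda E$; then $\tilde E \subseteq \{|x_n|<1\}$ and its upper boundary is the graph $x_n = \tilde f(x') := \lambda f(x'/\lambda)$. The scaling law $H^s_E(x) = \lambda^s\, H^s_{\tilde E}(\lambda x)$ reduces the problem to producing a positive lower bound, depending only on $n$ and $s$, for $H^s_{\tilde E}$.

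Next I would verify the slope and curvature hypotheses for $\partial \tilde E_\pm$. A direct differentiation gives
$$
\nabla f(x') = \frac{4\delta\, x'}{\pi\,(1+\delta^2 |x'|^4)},
$$
and writing $r := |x'|$, the quantity $\delta r/(1+\delta^2 r^4)$ is maximized when $\delta^2 r^4$ is a universal constant, producing the uniform estimate $|\nabla f| \le C\sqrt\delta$. A similar elementary calculation bounds the Hessian of $f$ in operator norm by $C\delta$ uniformly in $x'$. These bounds are preserved for $\tilde f$ up to constants, since $\lambda \in [1/2,1]$ when $\epsilon \le 1$. Consequently, the exterior normals $\tilde \nu_\pm$ of $\partial \tilde E_\pm$ satisfy $|\tilde \nu_\pm \cdot e_i| \le C\sqrt\delta$ for $i\in\{1,\dots,n-1\}$, and the classical directional curvatures are bounded by $C\delta$. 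Choosing $\delta$ small enough that $C\sqrt\delta \le \eta_0$ and $C\delta \le \kappa_0$, Proposition~\ref{STIMA:CUR:FLAT} yields $H^s_{\tilde E} \ge c_0$ on $\partial \tilde E$, and scaling back gives $H^s_E \ge \lambda^s c_0 \ge c_0/2$ for $\epsilon$ small.

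The main obstacle to keep in mind is the uniformity in $x'$ of the slope and curvature estimates: at large $|x'|$ the function $\arctan(\delta|x'|^2)$ saturates, but the $|x'|^4$ factor in the denominator of $\nabla f$ and $D^2 f$ makes these quantities decay, so no blow-up occurs at infinity and the uniform bounds $C\sqrt\delta$, $C\delta$ really do vanish as $\delta \to 0$. Everything else is bookkeeping with the rescaling and the invocation of the proposition.
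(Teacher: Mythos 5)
Your proof follows the same route as the paper's: decompose $\mathcal{C}E$ into $E_\pm$, verify the slope/curvature hypotheses of Proposition~\ref{STIMA:CUR:FLAT}, and conclude. However, your version is more careful on two points that the paper's one-line proof glosses over: (i) since $f = \epsilon + \tfrac{2}{\pi}\arctan(\delta|x'|^2)$ approaches $1+\epsilon > 1$ as $|x'|\to\infty$, the containment hypothesis \eqref{CONTE} fails for $E_\pm$ as written, and your rescaling by $\lambda=1/(1+\epsilon)$, combined with the scaling identity $H^s_E(x)=\lambda^s H^s_{\tilde E}(\lambda x)$, is the correct way to fix this; (ii) the paper asserts slope $O(\delta)$ and curvature $O(\delta^2)$, whereas the sharp uniform bounds are $\|\nabla f\|_\infty = O(\sqrt\delta)$ (maximum of $\delta r/(1+\delta^2 r^4)$) and $\|D^2 f\|_\infty = O(\delta)$, exactly as you computed. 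Both sets of bounds tend to zero, so the conclusion is unaffected, but your estimates are the correct ones. In short, your argument is the paper's argument done properly.
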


\begin{proof} We define
\begin{eqnarray*}&&
E_-:= \left\{ x=(x',x_n)\in\R^n {\mbox{ s.t. }}
x_n\le -\epsilon - \frac{2}{\pi}\,\arctan \big(\delta \,|x'|^2\big)
\right\}\\ {\mbox{and }}&&
E_+:= \left\{ x=(x',x_n)\in\R^n {\mbox{ s.t. }}
x_n\ge \epsilon + \frac{2}{\pi}\,\arctan \big(\delta \,|x'|^2\big)
\right\}.
\end{eqnarray*}
Notice that the horizontal component of the normal to~$\partial E$
is of size~$O(\delta)$ and the curvatures
are of size~$O(\delta^2)$. Thus, we are in the position of
exploiting Proposition~\ref{STIMA:CUR:FLAT}, from which we
obtain the desired result.
\end{proof}

\section{Neckpinch}\label{neckpinch}

In this section we prove our main result, that is Theorem \ref{thm-neckpinch}. More precisely, we provide an example of surface evolving by fractional mean curvature flow, which develops a singularity before it can shrink to a point. For the classical mean curvature flow, this phenomenon appears in dimensions $n\geq 3$ (that is, for at least $2$-dimensional surfaces) in the neckpinch singularity described in the introduction. In this example, the fact that the dimension of the surface is larger than 2 is crucial to have positive mean curvature in the neck. 

Interestingly, when we consider the fractional mean curvature flow, this neckpinch singularity can be observed also in dimension $n=2$, providing hence a counterexample to the Grayson Theorem \cite{Gr87}, which states that any smooth embedded curve in the plane evolving by (classical) MCF shrinks to a point. The heuristic reason for this is that, thanks to its nonlocality, the fractional mean curvature of a very thin neck, is strictly positive also in dimension $2$: Indeed if we ``sit'' on the boundary of the neck we see much more complement of $E$ than $E$ itself.

To build our example, we start by recalling the following fact that was proved in \cite{SaezV}. 
\begin{lem}[Lemma 2 and Corollary 3 in \cite{SaezV}]\label{ball}
The fractional mean curvature of the ball of radius $R$ is equal, up to dimensional constants, to $R^{-s}$.

More precisely, for any $x\in \partial B_1$
$$H_{B_1}^s(x)=\bar \omega,$$
for some $\bar\omega>0$, and for any $x\in \partial B_R(0)$,
$$H_{B_R}^s(x)=\bar \omega R^{-s}.$$

Moreover, if we set $R(t):=(R_0^{s+1}-(\bar\omega(1+s)) t)^{\frac{1}{s+1}}$, then $B_{R(t)}$ is a solution to the fractional mean curvature flow starting from $B_{R_0}$ and it collapses to a point in the finite time
\begin{equation}\label{Tball}
T_{B_{R_0}}=\frac{R_0^{s+1}}{\bar \omega(s+1)}.
\end{equation}
\end{lem}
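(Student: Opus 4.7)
The plan is to prove the three assertions (value of $H^s_{B_1}$, scaling to $B_R$, and the radius ODE) in order, using rotational symmetry, a change of variables, and a simple separation of variables.

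First, I would show that $H^s_{B_1}(x)$ is a positive constant independent of the point $x\in\partial B_1$. Independence from $x$ is immediate from rotational invariance of the Lebesgue measure and the kernel $|x-y|^{-n-s}$: if $\mathcal R$ is a rotation with $\mathcal R(x)=x'$, the change of variable $y\mapsto \mathcal R(y)$ in \eqref{H_s} sends the integrand at $x$ to the integrand at $x'$. Denote the common value by $\bar\omega$. To see $\bar\omega>0$, fix $x=e_1\in \partial B_1$ and note that the reflection $y\mapsto 2x-y$ across the tangent hyperplane $\{y_1=1\}$ swaps the half-space $\{y_1>1\}\subset \mathcal C B_1$ with $\{y_1<1\}$, while mapping $B_1$ strictly inside $\{y_1<1\}$; so integrating against the reflection-symmetric kernel, the contribution of $\mathcal CB_1$ strictly dominates that of $B_1$.

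Next, for arbitrary $R>0$ and $x\in\partial B_R$, I would apply the change of variable $y=Rz$ in the principal value integral defining $H^s_{B_R}(x)$. Writing $x=Rx_0$ with $x_0\in\partial B_1$ and using $\chi_{B_R}(Rz)=\chi_{B_1}(z)$ (and similarly for the complement), one gets
\begin{equation*}
H^s_{B_R}(x)=\int_{\R^n}\frac{\chi_{\mathcal C B_1}(z)-\chi_{B_1}(z)}{|Rx_0-Rz|^{n+s}}\,R^n\,dz= R^{-s}\,H^s_{B_1}(x_0)=\bar\omega\,R^{-s}.
\end{equation*}

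For the evolution, I would verify that $B_{R(t)}$ with the stated $R(t)$ satisfies \eqref{NMCF}. Parametrise a boundary point by $x(t)=R(t)\,x_0$ with $x_0\in\partial B_1$ fixed; then the outward normal is $\nu=x_0$, so $\partial_t x\cdot\nu=R'(t)$. Equation \eqref{NMCF} combined with the previous step becomes the ODE
\begin{equation*}
R'(t)=-\bar\omega\,R(t)^{-s},\qquad R(0)=R_0.
\end{equation*}
Separating variables gives $\tfrac{d}{dt}\bigl(R^{s+1}\bigr)=-\bar\omega(s+1)$, hence $R(t)^{s+1}=R_0^{s+1}-\bar\omega(s+1)t$, i.e.\ the claimed formula. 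As long as $R(t)>0$ the boundary is smooth and the equation is satisfied classically, which is in particular a viscosity solution; the extinction time is read off by setting $R(T)=0$, producing $T_{B_{R_0}}=R_0^{s+1}/(\bar\omega(s+1))$.

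I do not expect serious obstacles here: everything reduces to symmetry plus a homogeneity argument for the integral and an elementary ODE. The only subtle point is the strict positivity of $\bar\omega$, which the reflection argument above settles; alternatively one can appeal directly to Proposition \ref{STIMA:CUR:FLAT} or to the fact that a strictly convex set has strictly positive fractional mean curvature at every boundary point.
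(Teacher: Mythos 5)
Your proof is correct, and it is the natural argument for this statement (the paper itself does not give a proof; it simply cites Lemma 2 and Corollary 3 of \cite{SaezV}, which proceed by the same rotational-invariance, scaling, and ODE-integration steps). The homogeneity argument $H^s_{B_R}(Rx_0)=R^{-s}H^s_{B_1}(x_0)$ and the separation of variables leading to $R(t)^{s+1}=R_0^{s+1}-\bar\omega(s+1)t$ are exactly what one wants.

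One small remark on the positivity of $\bar\omega$: the reflection $y\mapsto 2e_1-y$ across the tangent hyperplane $\{y_1=1\}$ at $x=e_1$ should be applied inside the principal value, pairing $\{y_1>1\}\subset\mathcal C B_1$ with the half-space $\{y_1<1\}\supset B_1$. This gives, after cancelling the two half-space integrals,
\begin{equation*}
\bar\omega=H^s_{B_1}(e_1)=2\int_{\mathcal C B_1\cap\{y_1<1\}}\frac{dy}{|e_1-y|^{n+s}},
\end{equation*}
which is a finite, strictly positive quantity because the region $\mathcal C B_1\cap\{y_1<1\}$ has thickness of order $|y'-e_1'|^2$ near the tangency point, so the singularity of the kernel is integrable precisely because $s<1$. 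Your phrasing (``the contribution of $\mathcal C B_1$ strictly dominates that of $B_1$'') captures this, but spelling out the P.V.\ cancellation avoids the appearance of comparing two divergent integrals. With that minor clarification the argument is complete.
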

Observe that, while for the classical mean curvature flow, the extinction time of a sphere of radius $R_0$ is proportional to $R_0^2$, in the fractional case it is proportional to $R_0^{s+1}$.

We can now give the proof of our main result.

\begin{proof}[Proof of Theorem \ref{neckpinch}]
We consider now the set $E_{\epsilon}$ defined in Corollary \ref{arctan}:

$$ E_\epsilon:= \left\{ x=(x',x_n)\in\R^n {\mbox{ s.t. }}
|x_n|< \epsilon + \frac{2}{\pi}\,\arctan \big(\delta \,|x'|^2\big)
\right\} . $$

We know that there exists $\overline \epsilon$ and $\overline \delta$ positive such that, for any $0<\epsilon\leq \overline \epsilon$ and $0<\delta \leq \overline \delta$

\begin{equation}\label{H_s>0} \inf_{x\in \partial E_\epsilon} H^s_{E_{\epsilon}}(x) \geq c_0> 0, \end{equation}
for some $c_0$ depending only on $n$ and $s$.

Let now $\kappa$ and $\epsilon_0$ be two positive parameters satisfying
\begin{equation}\label{k-epsilon}
\kappa <c_0\quad \mbox{\and}\quad \epsilon_0 <\min\left\{\bar \epsilon, \frac{1}{4}\kappa T_{B_{1}}\right\},
\end{equation}
where $T_{B_{1}}$ is the extinction time of the ball of radius $1$ given in \eqref{Tball}.

The idea is to consider the set $E_{\epsilon_0}$ and to make it evolve with constant velocity $\kappa$ in the inner vertical direction. 
More precisely, we set
$$\epsilon(t):=\epsilon_0-\kappa\,t,$$

and, for any $t$, we consider the set
\begin{equation}\label{E-t}
E_{\epsilon(t)}:=\left\{ x=(x',x_n)\in\R^n {\mbox{ s.t. }}
|x_n|< \epsilon(t) + \frac{2}{\pi}\,\arctan \big(\delta \,|x'|^2\big)
\right\} . 
\end{equation}

Hence, we have that any point $x\in \partial E_{\epsilon(t)}$ satisfies
$$\partial_ t x\cdot \nu =V \cdot \nu,$$
where 
\[V=\begin{cases} -\kappa e_n &\mbox{if}\;\;x_n>0\\
\kappa e_n &\mbox{if}\;\;x_n<0.
\end{cases}
\]

%
Thus,%
$$\partial_t x\cdot \nu \geq -\kappa> -c_0\geq -H_{E_{\epsilon(t)}}^s,$$
where in the last inequality we have used \eqref{H_s>0} and the fact that $E_{\epsilon(t)} \subset E_{\epsilon_0}$ for any $t>0$.
Therefore, the set $E_{\epsilon(t)}$ is a smooth supersolution (and hence in particular a viscosity supersolution) to \eqref{NMCF}.

By the definition of the set $E_{\epsilon_0}$ we have that the infimum distance between the two disconnected components of its boundary $\{(x',x_n)\in \R^n \,\,\mbox{s.t.}\,\,x_n=\epsilon_0 + \arctan{(\delta |x'|^2)}\}$ and $\{(x',x_n)\in \R^n \,\,\mbox{s.t.}\,\,x_n=-\epsilon_0 - \arctan{(\delta |x'|^2)}\}$ is attained at the points $(0,\dots,0,\epsilon_0)$ and $(0,\dots,0,-\epsilon_0)$. Since $E_{\epsilon(t)}$ evolves with constant negative velocity $\kappa$ along the  inner vertical direction, we deduce that the singular time for $E_{\epsilon(t)}$ is given by
\begin{equation}\label{T_E}
T_{E_{\epsilon(t)}}=\frac{2\epsilon_0}{\kappa}.
\end{equation}

Let now consider any closed set $A_0$ with the following properties:
\begin{enumerate}
\item $A_0$ is rotationally symmetric around the $x_1$ axis;
\item $A_0$ is symmetric with respect to the $x_1=0$ hyperplane;
\item $A_0$ is contained in $E_{\epsilon_0}$;
\item $A_0$ contains two balls $B_{1}^-$ and $B_{1}^+$ of radius $1$ centered at $(-L,0,\dots,0)$ and $(L,0,\dots,0)$ respectively, where $L$ is chosen large enough so that (1) and (2) are both satisfied.
\end{enumerate}

We consider now the fractional mean curvature flow $A_t$ starting from $A_0$. By uniqueness, $A_t$ retains the symmetries of $A_0$. By the comparison principle (Proposition \ref{comparison}), $A_t$ must be contained in $E_{\epsilon(t)}$. Moreover it must contain the evolutions $B_{1,t}^-$ and $B_{1,t}^+$ of the two balls $B_{1}^-$ and $B_{1}^+$.

On the one hand, since $A_t$ is contained in $E_{\epsilon(t)}$, using \eqref{T_E} and the choice of $\epsilon_0$ \eqref{k-epsilon}, we deduce that at any time $t>T_A$, where
$$
T_{A}= \frac{2\epsilon_0}{\kappa}\leq \frac{1}{2}T_{B_{1}},
$$
the $x_1=0$ cross section of $A_t$ is empty. 

On the other hand, by assumption (2), at the same time, $A_t$ contains two balls with positive radius in the $x_1>0$ and $x_1<0$ half-spaces respectively. This shows that, at some time smaller than $T_A$, the set $A_t$ splits into two symmetric disconnected components, hence it cannot shrink to a point. This concludes the proof of Theorem~\ref{thm-neckpinch}.
\end{proof}

\end{document}